\DeclareMathOperator{\Gr}{Gr}
\newcommand{\C}{{\mathbb C}}
\newcommand{\Z}{{\mathbb Z}}
\newcommand{\cO}{{\mathcal O}}
\DeclareMathOperator{\GL}{GL}
\DeclareMathOperator{\Sp}{Sp}
\newcommand{\IG}{\mathrm{IG}}
\newcommand{\e}{\mathbf{e}}
\newtheorem{thm}{Theorem}[section]
\newtheorem{lemma}[thm]{Lemma}
\newtheorem{prop}[thm]{Proposition}
\theoremstyle{defn} \newtheorem{defn}[thm]{Definition}}
\theoremstyle{remark} \newtheorem{remark}[thm]{Remark}
\newtheorem{example}[thm]{Example}}
\begin{document}

\title{Conjecture $\mathcal{O}$ holds for the odd symplectic Grassmannian} \author{Changzheng Li}
\address{School of Mathematics, Sun Yat-sen University, Guangzhou 510275, P.R. China} \email{lichangzh@mail.sysu.edu.cn}
\author{Leonardo C. Mihalcea}
\address{ Department of Mathematics, 460 McBryde Hall, Virginia Tech University, Blacksburg VA 24060 USA} \email{lmihalce@math.vt.edu}
\author{Ryan Shifler}
\address{ Department of Mathematics, 460 McBryde Hall, Virginia Tech University, Blacksburg VA 24060 USA}\email{twigg@vt.edu}
\date{June 2, 2017}
\thanks{C. L. was   supported in part by the Recruitment Program of Global Youth Experts in China and   the NSFC Grant 11521101.}\thanks{L.M. was supported in part by NSA Young Investigator Award 98320-16-1-0013 and a Simons Collaboration grant.}
\subjclass[2010]{Primary 14N35; Secondary 15B48, 14N15, 14M15}

\begin{abstract} Let $\IG(k, 2n+1)$ be the odd-symplectic Grassmannian. Property $\mathcal{O}$, introduced by Galkin, Golyshev and Iritani for arbitrary complex, Fano manifolds $X$, is a statement about the eigenvalues of the linear operator obtained by the quantum multiplication by the anticanonical class of $X$. We prove that property $\cO$ holds in the case when $X= \IG(k, 2n+1)$ is an odd-symplectic Grassmannian. The proof uses the combinatorics of the recently found quantum Chevalley formula for $\IG(k, 2n+1)$, together with the Perron-Frobenius theory of nonnegative matrices. \end{abstract}
\maketitle

\section{Introduction}\label{s:intro} Fix $1 \le k \le n+1$ and let $\IG:= \IG(k, 2n+1)$ be the odd-symplectic Grassmannian. This is a smooth Fano algebraic variety parametrizing $k$ dimensional linear subspaces $V \subset \C^{2n+1}$ which are isotropic with respect to a skew-symmetric, bilinear form $\omega$ with kernel of dimension $1$; see \cite{mihai:odd,pech:quantum, mihalcea.shifler:qhodd}. The purpose of this paper is to prove Galkin, Golyshev and Iritani's Conjecture $\cO$ \cite[Conj. 3.1.2]{GGI} for the variety $\IG$, i.e.~to verify that {\em Property $\cO$} holds for $\IG$. We recall the precise statement, following \cite[\S 3]{GGI}.

Let $K:=K_{\IG}$ be the canonical bundle of $\IG$ and let $c_1(\IG):=c_1(-K) \in H^2(\IG) \simeq \Z$ be the anticanonical class. The quantum cohomology ring $(\mathrm{QH}^*(\IG), \star)$ is a graded algebra over $\Z[q]$, where $q$ is the quantum parameter and it has degree $2n+2-k$. Consider the specialization $H^\bullet(\IG):= \mathrm{QH}^*(\IG)_{|q=1}$ at $q=1$. The quantum multiplication by the first Chern class $c_1(\IG)$  induces an endomorphism $\hat c_1$ of the finite-dimensional vector space $H^\bullet(\IG)$:
 \[ y\in H^\bullet(\IG) \mapsto \hat c_1(y):= (c_1(\IG)\star y)|_{q=1} \/. \] Denote by $\delta_0:=\max\{|\delta|:\delta \mbox{ is an eigenvalue of } \hat c_1\}$. Then Property $\mathcal{O}$ states the following.
  \begin{enumerate}
    \item The real number $\delta_0$ is an eigenvalue of $\hat c_1$ of multiplicity one.
    \item If $\delta$ is any eigenvalue of $\hat c_1$ with $|\delta|=\delta_0$, then $\delta=\delta_0 \zeta$ for some $r$-th root of unity $\zeta\in \mathbb{C}$, where $r= 2n+2-k$ is the Fano index of $\IG$.
  \end{enumerate}

The property $\cO$ was conjectured to hold for any Fano, complex, manifold $X$ by Galkin, Golyshev and Iritani \cite{GGI}. (In that case one considers the even part of the quantum cohomology ring, and one does not necessarily restrict to $rank~H^2(X) = 1$.)~It is the main hypothesis needed for the statement of Gamma Conjectures I and II, which in turn are related to mirror symmetry on $X$ and refine Dubrovin conjectures; we refer to \cite{GGI} for details. Property $\cO$ was proved for several Grassmannians of classical types \cite{Riet,GaGo,Cheo} and a complete proof was recently given for any homogeneous space $G/P$ \cite{ChLi}.  

The odd-symplectic Grassmannian $\IG$ admits an action of Proctor's (complex) odd-symplectic group $\Sp_{2n+1}$ \cite{proctor:oddsymgrps}. If $k< n+1$ then $\Sp_{2n+1}$ acts with two orbits and if $k= n+1$ the action is transitive and $\IG$ is isomorphic to the Lagrangian Grassmannian $\IG(n, 2n)$. The odd-symplectic Grassmannian is sandwiched between two homogeneous spaces \[ \IG(k-1, 2n) \subset \IG(k, 2n+1) \subset \IG(k, 2n+2) \] where $\IG(k, 2n+2)$ parametrizes the $k$-dimensional subspaces in $\C^{2n+2}$ which are isotropic with respect to a symplectic form on $\C^{2n+2}$ (and similarly for $\IG(k-1, 2n)$). Then $\IG(k-1, 2n)$ can be identified with the closed orbit under $\Sp_{2n+1}$-action, while $\IG(k, 2n+1)$ is a smooth Schubert variety in $\IG(k,2n+2)$; see \cite{mihai:odd, pech:quantum} and \S \ref{s:prelims} below. An easy exercise is to check this for $k=1$: then $\IG(1, 2n+1) = \mathbb{P}^{2n}$ and the closed orbit consists of a single point.

Because quantum cohomology is not functorial, one needs to check Property $\cO$ on a case by case basis. In particular, its knowledge for the isotropic Grassmannians $\IG(k-1,2n)$ and $\IG(k, 2n+2)$ does not imply it for the odd-symplectic Grassmannians $\IG$. Our proof is based on the Perron-Frobenius theory of non-negative matrices, applied to the operator $\hat c_1$. The usefulness of this theory for proving Property $\mathcal{O}$ was already noticed in \cite[Rmk 3.1.7]{GGI}, and it was the main technique used by Cheong and Li \cite{ChLi}. The arguments from \cite{ChLi} use that the Gromov-Witten (GW) invariants for $G/P$ are enumerative, in particular the (Schubert) structure constants of $\mathrm{QH}^*(G/P)$ are non-negative integers, and in addition, that the GW invariants satisfy certain symmetries. However, the positivity does not hold for the odd-symplectic Grassmannian (see e.g.~(\ref{E:qneg}) below), and it is still unknown whether any analogous symmetries exist. We circumvent this problem by making heavy use of the combinatorics of the recently found quantum Chevalley formula in $\mathrm{QH}^*(\IG)$ \cite{mihalcea.shifler:qhodd}, which governs the quantum multiplication by $c_1(\IG)$.

\subsection*{Acknowledgements} The first named author thanks Daewoong Cheong for discussions and collaborations on related projects.

\section{Preliminaries}\label{s:prelims} In this section we introduce briefly the odd-symplectic Grassmanian and some basic properties of its cohomology ring. We refer to \cite{pech:thesis,pech:quantum} for details; we follow closely the exposition from \cite{mihalcea.shifler:qhodd}.

 Let $E:= \C^{2n+1}$ be an odd dimensional complex vector space with basis $\{ \e_1, \e_2, \ldots , \e_{2n+1} \}$. An odd-symplectic form is a skew symmetric, bilinear form $\omega$ on $E$ with kernel of dimension $1$. Without loss of generality, one can assume that $\ker \omega = \langle \e_1 \rangle$ and that $\omega (\e_i, \e_{2n+3-j} )= \delta_{i,j}$ for $1 \le i \le n+1$ and $2 \le j \le n+1$.
The {\em odd-symplectic Grassmannian} $\IG:=\IG(k, 2n+1)$ parametrizes subspaces of dimension $k$ in $E$ which are isotropic with respect to the form $\omega$. It is naturally a subspace of the ordinary Grassmannian $\Gr(k, 2n+1)$, and it is in fact the zero locus of a general section on $\bigwedge^2 \mathcal{S}^*$ induced by the symplectic form $\omega$; here $\mathcal{S}$ denotes the rank $k$ tautological subbundle on $\Gr(k, 2n+1)$. As such it is a projective manifold of dimension \[ \dim \IG = \dim \Gr(k, 2n+1) - \frac{k(k-1)}{2} = k (2n+1-k) - \frac{k(k-1)}{2} \/. \] The form $\omega$ can be completed to a non-degenerate form $\widetilde{\omega}$ on a space $\C^{2n+2}$, and this gives an embedding $\iota: \IG \to \IG(k, 2n+2)$ into the symplectic Grassmannian which parametrizes linear subspaces isotropic with respect to $\widetilde{\omega}$. The restriction of $\omega$ to the subspace $\C^{2n} = \{ \e_2, \ldots , \e_{2n+1} \}$ is non-degenerate, and this gives an inclusion $\IG(k, 2n) \to \IG(k, 2n+1)$ of the symplectic Grassmannian $\IG(k, 2n)$ into the odd-symplectic one. Therefore one can regard the odd-symplectic Grassmannian as an ``intermediate" space between two symplectic Grassmannians. More is true: the symplectic Grassmannians $\IG(k, 2n)$ and $\IG(k, 2n+2)$ are homogeneous spaces for the (complex) symplectic groups $\Sp_{2n}$ and $\Sp_{2n+2}$ respectively. The odd-symplectic Grassmannian has an action of the {\em odd-symplectic group} $\Sp_{2n+1}$, defined by Proctor \cite{proctor:oddsymgrps,proctor:oddsymgrpscomb}. This group contains $\Sp_{2n}$ as a subgroup, and it is contained in $\Sp_{2n+2}$ (but {\em not} as a subgroup). By definition, the odd-symplectic group is the subgroup of $\GL_{2n+1}(\C)$ consisting
of those $g \in \GL_{2n+1}(\C)$ such that $\omega(g.u, g.v) = \omega(u,v)$ for any $u,v \in \C^{2n+1}$. If $k < n+1$ the group $\Sp_{2n+1}$ acts on $\IG$ with two orbits given by: \[ X^\circ:=  \{V \in \IG(k, 2n+1): \e_1 \notin V \}; \quad X_c := \{ V \in \IG(k, 2n+1): \e_1 \in V \} \/. \] Notice that the closed orbit $X_c$ can be naturally identified with $\IG(k, 2n)$. We also remark that if $k=1$ then $\IG(1, 2n+1)= \mathbb{P}^{2n}$ (the projective space), while if $k=n+1$ then $\IG(k, 2n+1) = \IG(n, 2n)$ is the Lagrangian Grassmannian; in the first situation $X_c =  \{ \langle \e_1 \rangle \}$ (a point), and in the second $X_c = \IG(n,2n)$, thus $X^\circ = \emptyset$.
{\em To avoid special cases, from now on we will consider $k < n+1$.}

Let $P \subset \Sp_{2n+2}$ be the maximal parabolic subgroup which preserves $\e_1$ (i.e.the kernel of $\omega$) and let $B_{2n+2} \subset \Sp_{2n+2}$ be the Borel subgroup which preserves the standard flag in $\C^{2n+2}$. Mihai showed in \cite[Prop. 3.3]{mihai:odd} that there is a surjection $P \to \Sp_{2n+1}$ obtained by restricting $g \mapsto g_{|E}$. Then the Borel subgroup of $B_{2n+2}$ restricts to the (Borel) subgroup $B \subset \Sp_{2n+1}$. We recall the description of $B$-orbits on $\IG$.

A Schubert variety in $\IG(k, 2n+2)$ is the closure of an orbit of the Borel subgroup $B_{2n+2}$. We follow conventions from \cite{BKT2} and index these Schubert varieties by $(n-k+1)$-strict partitions of the form $\lambda=(\lambda_1 \ge \lambda_2 \ge \ldots \ge \lambda_k)$, where $\lambda_1 \le 2n+2 -k $ and $\lambda_k \ge 0$; the $(n+1-k)$-strict condition means that $\lambda_i > \lambda_{i+1}$ whenever $\lambda_i > n-k+1$. We denote the set of these partitions by $\Lambda^{ev}$. For $1 \le i \le n+1$ define $F_i: = \langle \e_1, \ldots , \e_i \rangle $ and $F_{n+1+i} = F_{n+1-i}^\perp$, where the perp is taken with respect to the completed (non-degenerate) symplectic form $\widetilde{\omega}$. The Schubert variety $Y(\lambda) \subset \IG(k, 2n+2)$ relative to the isotropic flag $F_\bullet = (F_i)$ is defined by
\[ Y(\lambda):= \{ V \in \IG(k,2n+2) : \dim (V \cap F_{w(j)}) \geq j \mbox{ } \forall 1 \leq j \leq \ell(\lambda) \}\] where $\ell(\lambda)$ is the number of non-zero parts of $\lambda$ and \[w(j)=2n+3-k-\lambda_j+\# \{i<j: \lambda_i+\lambda_j :2(n-k)+j-i \}. \]
This is a subvariety of $\IG(k, 2n+2)$ of codimension $|\lambda| := \lambda_1 + \ldots + \lambda_k$. Let $1^k$ denote the partition $(1, \ldots , 1)$. The following key fact is due to Mihai \cite{mihai:thesis,mihai:odd}.

\begin{thm}\label{thm:oddSchubert} (a) The odd-symplectic Grassmannian $\IG(k, 2n+1)$ equals the Schubert variety $Y(1^k)$ in $\IG(k, 2n+2)$.

(b) Those Schubert varieties $Y(\lambda)$ of $\IG(k, 2n+2)$ contained in $Y(1^k)$ coincide with the closures of the orbits of the odd-symplectic Borel group $B$ acting on $\IG(k, 2n+1)$.

\end{thm}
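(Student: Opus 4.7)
The plan is to treat parts (a) and (b) in sequence, with (b) following mostly by group theory once (a) is in hand.

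For part (a), I would note that $Y(1^k)$ has codimension $|1^k| = k$ in $\IG(k, 2n+2)$, which matches the codimension of $\IG(k, 2n+1) \subset \IG(k, 2n+2)$ computed from the dimension formulas in the preliminaries. Both varieties are irreducible, so it suffices to establish one inclusion. The key geometric observation is that $F_{2n+1} = F_1^\perp$ coincides with $E = \C^{2n+1}$: indeed, $\widetilde{\omega}$ extends $\omega$, $\e_1 \in \ker \omega$, so every vector of $E$ is $\widetilde{\omega}$-orthogonal to $\e_1$, which forces $E \subset F_{2n+1}$ with equality by dimension. Consequently a subspace $V \in \IG(k, 2n+2)$ with $V \subset F_{2n+1}$ lies in $E$ and is automatically isotropic for $\omega = \widetilde{\omega}|_E$, giving $V \in \IG(k, 2n+1)$; the converse is immediate. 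It remains to unpack the defining flag conditions of $Y(1^k)$ and show they reduce to the single requirement $V \subset F_{2n+1}$. Applying the formula for $w(j)$ with $\lambda_j = 1$ for all $j \leq k$, I expect the condition at $j = k$ to yield $\dim(V \cap F_{2n+1}) \geq k$, i.e.\ $V \subset F_{2n+1}$, and the conditions at $j < k$ to be implied by this.

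For part (b), observe that $B_{2n+2} \subset \Sp_{2n+2}$ preserves the full standard flag and in particular the line $F_1 = \langle \e_1 \rangle$, so $B_{2n+2} \subset P$. By Mihai's surjection $P \twoheadrightarrow \Sp_{2n+1}$ given by $g \mapsto g|_E$, the subgroup $B_{2n+2}$ maps onto the odd-symplectic Borel $B$. Since $P$ preserves $E$, it preserves $\IG(k, 2n+1) = Y(1^k)$, and the $B_{2n+2}$-action on this variety factors through $B$; in particular, $B_{2n+2}$-orbits on $\IG(k, 2n+1)$ coincide with $B$-orbits. Every point of $\IG(k, 2n+2)$ lies in a unique Schubert cell $Y(\lambda)^\circ$, the dense $B_{2n+2}$-orbit in $Y(\lambda)$, and those cells contained in $Y(1^k)$ are precisely those with $Y(\lambda) \subset Y(1^k)$. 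Taking closures identifies the $B$-orbit closures on $\IG(k, 2n+1)$ with the Schubert subvarieties of $Y(1^k)$.

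The only genuine obstacle I anticipate is a finite combinatorial check in the proof of (a): for $\lambda = 1^k$ the indices $w(j)$ computed from the $(n-k+1)$-strict conventions must collapse into the single effective containment $V \subset F_{2n+1}$. This needs care because the formula for $w(j)$ involves a conditional count over $i<j$, and one must verify under the standing hypothesis $k<n+1$ that the conditions at $j<k$ are consequences of the one at $j=k$ rather than independent constraints. The remainder of the argument is dimension counting together with the standard Bruhat decomposition.
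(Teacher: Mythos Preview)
The paper does not prove this theorem; it is quoted as a result of Mihai with no argument supplied, so there is nothing in the paper to compare your proposal against.

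Your sketch is nonetheless correct and would yield a complete proof. For part (a), the identification $F_{2n+1}=F_1^{\perp}=E$ is exactly the right geometric input, and the combinatorial check you flag goes through: with $\lambda=1^k$ and $k<n+1$ one finds that the binding condition at $j=k$ is $\dim(V\cap F_{2n+1})\ge k$, i.e.\ $V\subset F_{2n+1}$, while the conditions for $j<k$ follow from this by a dimension count inside $F_{2n+1}$. For part (b), your reduction via the surjection $B_{2n+2}\to B$ and the $B_{2n+2}$-invariance of $Y(1^k)$ is exactly right; since the kernel of $P\to\Sp_{2n+1}$ acts trivially on $E$, the $B_{2n+2}$-orbits and the $B$-orbits on $\IG(k,2n+1)$ coincide, and the Bruhat decomposition of $\IG(k,2n+2)$ does the rest.
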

The theorem allows us to define the Schubert varieties in $\IG(k, 2n+1)$ as the Schubert varieties in $\IG(k, 2n+2)$ contained in $Y(1^k)$. One can check that $Y(\lambda) \subset Y(1^k)$ if and only if $\lambda$ satisfies the condition that if $\lambda_k = 0$ then $\lambda_1 = 2n+2 - k$; in other words, if the first column is not full, then the first row must be full.\begin{footnote} {One word of caution: the Bruhat order does not translate into partition inclusion. For example, $(2n+2-k, 0, \ldots , 0) \le (1, 1, \ldots 1) $ in the Bruhat order for $k < n+1$.}\end{footnote} We will use a variant of the indexing set $\Lambda^{ev}$, due to P{e}ch, which conveniently records the codimension relative to $\IG$: \[ \Lambda := \{ \lambda =  (2n+1-k \geq\lambda_1 \geq \cdots \geq \lambda_k \geq -1): \lambda \textrm{ is } n-k \textrm{-strict}, \textrm{ if } \lambda_k=-1 \textrm{ then } \lambda_1=2n+1-k \} \/. \] Pictorially, the partitions in $\Lambda$ are obtained by removing the full first column $1^k$ from the partitions in $\Lambda^{2n+2}_k$, regardless of whether a part equal to $0$ is present. For $\lambda \in \Lambda$, we define the Schubert variety $X(\lambda):= Y(\lambda+ 1^k)$. Then the codimension of $X(\lambda)$ in $\IG(k, 2n+1)$ equals $|\lambda|$.

\begin{example}

Let $k=5$, $n=7$. Consider the partition $(11 \geq 6 \geq 3 \geq 3 \geq 0) \in \Lambda^{ev}$. The corresponding partition in $\Lambda$ is \[(\lambda_1-1 \geq \lambda_2-1 \geq \lambda_3-1\geq \lambda_4-1\geq \lambda_5-1)=(10 \geq 5 \geq 2 \geq 2 \geq -1).\]
Pictorially,
\begin{equation*}
\Yvcentermath1
\yng(11,6,3,3,0)   -  \yng(1,1,1,1,1)=
\young(:\strut \strut \strut \strut \strut \strut \strut \strut \strut \strut ,:\strut \strut \strut \strut \strut,:\strut \strut,:\strut \strut,\strut)
\end{equation*}

\end{example}

\subsection{The (quantum) cohomology ring} The theorem \ref{thm:oddSchubert} implies that the cohomology ring $H^*(\IG)$ of $\IG$ has a $\Z$-basis given by the fundamental classes of Schubert varieties $[X(\lambda)] \in H^{2 |\lambda|}(\IG)$ where $\lambda$ varies in $\Lambda$. The quantum cohomology ring $\mathrm{QH}^*(\IG)$ is a graded $\Z[q]$-algebra with a $\Z[q]$-basis given by Schubert classes $[X(\lambda)]$ for $\lambda \in \Lambda$. The grading is given by $\deg q = 2n+2-k$ (i.e. the degree of the anticanonical divisor). The multiplication is given by \[ [X(\lambda)] \star [X(\mu)] = \sum_{\nu \in \Lambda, d \ge 0} c_{\lambda, \mu}^{\nu,d} q^d [X(\nu)] \/, \] where $c_{\lambda, \mu}^{\nu,d}$ are the $3$-point, genus $0$, Gromov-Witten invariants corresponding to rational curves of degree $d$ intersecting the classes $[X(\lambda)]$, $[X(\mu)]$ and the Poincar{\'e} dual of $[X(\nu)]$. Unlike the homogeneous case, these numbers might be negative in general. P{e}ch found a quantum Pieri rule $[X(\lambda)] \star [X(i)]$ in the case of the odd-symplectic grassmannian of lines $\IG(2, 2n+1)$. She proved that in $\mathrm{QH}^*(\IG(2,5))$, \begin{equation}\label{E:qneg} [X(3,-1)] \star [X(2,1)] = -1 [X(3,2)] + \ldots ; \quad [X(3,-1)] \star [X(3,-1)] = - q [X(0)] + \ldots \/. \end{equation} For arbitrary $k$, the second and third named authors found a Chevalley formula calculating $[X(1)] \star [X(\lambda)]$ in the {\em equivariant} quantum cohomology ring, and proved that this formula gives a recursive algorithm to calculate all the other structure constants; see \cite{mihalcea.shifler:qhodd}.

For the purpose of this paper, we will need the multiplication in the quantum cohomology ring by the Chern class of the anticanonical line bundle $-K:= -K_{\IG}$, i.e. by the first Chern class $c_1(\IG(k, 2n+1))$ of the odd-symplectic Grassmannian. A standard calculation yields \[  c_1(\IG) = (2n+2-k) [X(1)] \/,  \] therefore the quantum multiplication by $c_1(\IG)$ is governed by the Chevalley formula $[X(\lambda)] \star [X(1)]$.
Notice also that $X(1)$ is an ample divisor in $\IG$ (it is simply the restriction of the Schubert divisor from $\Gr(k, 2n+1)$), therefore $\IG(k, 2n+1)$ is a Fano variety. We refer to either \cite{pech:thesis} or \cite{mihalcea.shifler:qhodd} for details. To describe the quantum multiplication by $[X(1)]$ we need to recall the ordinary Chevalley formula in $H^*(\IG(k, 2n+2))$ proved in \cite{BKT2}.

\begin{defn} \label{Def:->}Let $\lambda \in \Lambda^{ev}$. Following \cite[Definitions 1.2, 1.3 ]{BKT2} we say that the box in row $r$ and column $c$ of $\lambda$ is $(n+1-k)${\it -related} to the box in row $r'$ and column $c'$ if \[|c-n+k-2|+r=|c'-n+k-2|+r'. \]
Given $\lambda, \mu \in \Lambda^{ev}$ with $\lambda \subset \mu$, the skew diagram $\mu / \lambda$ is called a horizontal strip (resp. vertical) strip if it does not contain two boxes in the same column (resp. row).

We say that $ \xymatrix{\lambda \ar[r]^{ev} & \mu}$ for any $(n+1-k)$-strict partitions $\lambda, \mu$ if $\mu$ can be obtained by removing a vertical strip from the first $n+1-k$ columns of $\lambda$ and adding a horizontal strip to the result, so that
\begin{enumerate}
\item if one of the first $n+1-k$ columns of $\mu$ has the same number of boxes as the column of $\lambda$, then the bottom box of this column is $(n+1-k)$-related to at most one box of $\mu \setminus \lambda$; and
\item if a column of $\mu$ has fewer boxes than the same column of $\lambda$, the removed boxes and the bottom box of $\mu$ in this column must each be $(n+1-k)$-related to exactly one box of $\mu \setminus \lambda$, and these boxes of $\mu \setminus \lambda$ must all lie in the same row.
\end{enumerate}
If $ \xymatrix{\lambda \ar[r]^{ev} & \mu}$, we let $\mathbb{A}$ be the set of boxes of $\mu \setminus \lambda$ in columns $n+2-k$ through $2n+1-k$ which are not mentioned in (1) or (2). Then define $N(\lambda, \mu)$ to be the number of connected components of $\mathbb{A}$ which do not have a box in column $n+2-k$. Here two boxes are connected if they share at least a vertex.
\end{defn}

\begin{example}\label{Ex:dominate} If $\mu$ is obtained from $\lambda$ by adding exactly one box, then $ \xymatrix{\lambda \ar[r]^{ev} & \mu}$. A more interesting example is when $\lambda = (2n-2k+2, 1, \ldots , 1)$ (with $k-1$ ones) and $\mu = (2n+2 -k)$. In this case each of the boxes in column one of $\lambda$ is $(n+1-k)$-related to exactly one box in the first row and last $k$ columns of $\lambda$. For instance, consider the case $n=8$ and $k=6$. The related boxes are shown in the figure below.
\begin{center} \begin{figure}[h!]\includegraphics{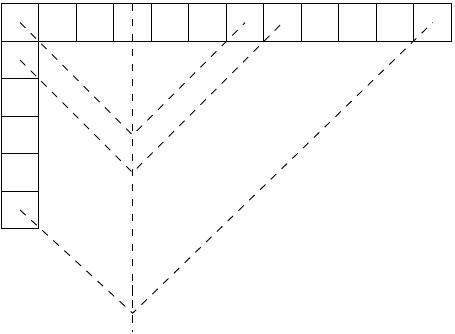}\end{figure}\end{center}
\end{example}
\begin{defn} Let $\lambda, \mu \in \Lambda$ be two partitions associated to the odd-symplectic Grassmannian $\IG(k, 2n+1)$. We say that $ \lambda \rightarrow \mu$ if $ \xymatrix{\lambda+1^k  \ar[r]^{ev} & \mu+1^k}$. If this is the case, we denote by $A(\lambda, \mu):= N(\lambda+1^k, \mu+1^k)$. \end{defn}

We need one more definition, for the partitions which will  appear as quantum terms.

\begin{defn}\label{def:lambda*} Let $\lambda = (\lambda_1, \ldots , \lambda_k)$ be a partition in $\Lambda$ such that $\lambda_1 = 2n+1-k$.

(a) If $\lambda_k \geq 0$ then let $\lambda^{*}=(\lambda_2 \geq \lambda_3 \geq \cdots \geq \lambda_k \geq 0)$. If $\lambda_k=-1$ then $\lambda^{*}$ does not exist.

(b) If $\lambda_2=2n-k$ then let $\lambda^{**}=(\lambda_1 \geq \lambda_3 \geq \cdots \geq \lambda_k \geq -1)$. If $\lambda_2<2n-k$ then $\lambda^{**}$ does not exist.
\end{defn}
In both situations notice that $|\lambda^*| = |\lambda^{**}| =  |\lambda| -( 2n+1 - k)$. As an example, if $\rho = (2n-k+1, 2n-k, \ldots , 2n-2k+2)$ is the partition indexing the Schubert point, then
$\rho^{*}= (2n-k, \ldots, 2n-2k+2, 0)$ and $\rho^{**} = (2n+1-k, 2n-1-k, \ldots, 2n-2k+2, -1)$. Clearly, there are also examples when one of the partitions $\lambda^*$ or $\lambda^{**}$ does not exist, but the other does. We are now ready to state the Chevalley formula proved in \cite{mihalcea.shifler:qhodd} (see also \cite{pech:quantum} for the case $k=2$).

\begin{thm}[quantum Chevalley formula for $\IG(k,2n+1)$]\label{thm:Chevalley} Let $\lambda \in \Lambda$ a partition. Then the following holds in $\mathrm{QH}^*(\IG)$:
\begin{eqnarray*}
[X(1)] \star [X(\lambda)]= \sum_{\lambda \rightarrow \mu, |\mu|= |\lambda|+1} 2^{A(\lambda, \mu)} [X(\mu)]+q[X(\lambda^*)]+q[X(\lambda^{**})]
\end{eqnarray*}
 If $\lambda^*$ or $\lambda^{**}$ do not exist then the corresponding quantum term is omitted.
\end{thm}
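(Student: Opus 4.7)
The plan is to split the computation of $[X(1)] \star [X(\lambda)]$ into its classical ($q^0$) part and its degree-one quantum ($q^1$) part, observing first that a degree count rules out any $q^d$ contribution with $d \ge 2$: since $\deg q = 2n+2-k$ and $|X(1)| = 1$, the only possible curve degrees are $d=0$ and $d=1$. The classical and quantum parts can then be attacked separately by quite different techniques.

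For the classical part, I would use the identification $\IG(k,2n+1)=Y(1^k)$ inside the ambient symplectic Grassmannian $\IG(k,2n+2)$. There is a well-known surjection $H^*(\IG(k,2n+2))\twoheadrightarrow H^*(\IG)$ under which $[Y(\mu)]\mapsto [X(\mu-1^k)]$ when $\mu\supseteq 1^k$ and $[Y(\mu)]\mapsto 0$ otherwise. Since $X(1)=Y(2,1,\ldots,1)$, the class $[X(1)]$ is the restriction of a Schubert class from $\IG(k,2n+2)$, so applying the Buch--Kresch--Tamvakis classical Chevalley formula there and pushing through the restriction map isolates exactly those $\mu\in\Lambda$ for which $\mu+1^k$ is reachable from $\lambda+1^k$ via the arrow $\xymatrix{\lambda+1^k\ar[r]^{ev}&\mu+1^k}$, with the multiplicity $2^{A(\lambda,\mu)}=2^{N(\lambda+1^k,\mu+1^k)}$ inherited directly from BKT. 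One needs to verify that no classical terms arise from $\mu\not\supseteq 1^k$; this follows from the shape condition built into $\Lambda$ (if the first column is not full then the first row must be).

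For the quantum terms, I would set up the moduli space of degree-one rational curves in $\IG$. Such a line is described by an isotropic flag $K\subset V\subset\Sigma$ with $\dim K=k-1$, $\dim\Sigma=k+1$, and the evaluation map sits over the ``two-step'' incidence variety of pairs $(K,\Sigma)$. Computing the 3-point GW invariant $\langle [X(1)],[X(\lambda)],[X(\nu)]\rangle_1$ via this moduli space, one sees that the space of lines through a generic translate of $X(\lambda)$ splits into two strata according to whether $\mathbf{e}_1$ lies in $K$ or $\mathbf{e}_1\notin\Sigma$; these correspond exactly to the two $\Sp_{2n+1}$-orbits on $\IG$ and produce the two partitions $\lambda^*$ (remove the top row, reflecting $\mathbf{e}_1\in K$ so that the line is ``pushed into'' the closed orbit $\IG(k-1,2n)$) and $\lambda^{**}$ (remove the second row and append a $-1$, reflecting the open-orbit case). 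A dimension count on each stratum forces the coefficient to be exactly $1$, and the existence conditions on $\lambda^*$, $\lambda^{**}$ fall out of when each stratum is actually non-empty.

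The main obstacle is that $\IG(k,2n+1)$ is not homogeneous, so the usual $G/P$ technology (transitive group action, positivity, the ``quantum $=$ classical'' principle) is not available; as the authors emphasize via (\ref{E:qneg}), GW invariants here can be negative, so one cannot rely on enumerative positivity. I would address this by working $\Sp_{2n+1}$-equivariantly and pulling back all computations to the genuinely homogeneous ambient $\IG(k,2n+2)$, where Billey/Knutson--Kumar-type localization is available, while the moduli of lines is itself built from homogeneous pieces ($\IG(k-1,2n)$ and $\IG(k+1,2n+2)$) whose classical Schubert calculus is understood. Careful bookkeeping of which lines actually deform inside the Schubert subvariety $Y(1^k)$ — rather than leaving it — is what produces the precise split into the $\lambda^*$ and $\lambda^{**}$ terms and prevents over- or undercounting.
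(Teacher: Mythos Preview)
The paper does \emph{not} prove this theorem. It is stated as a result imported from \cite{mihalcea.shifler:qhodd} (and \cite{pech:quantum} for $k=2$); the sentence immediately preceding the statement reads ``We are now ready to state the Chevalley formula proved in \cite{mihalcea.shifler:qhodd}.'' No argument is given here, and the remainder of the paper only \emph{uses} the formula (via Theorem~\ref{thm:positive} and Lemma~\ref{lemma:cycle}) to run the Perron--Frobenius machinery.

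So there is nothing to compare your proposal against in this paper. Your outline is a plausible strategy for how one might establish the formula --- the classical part via restriction from $\IG(k,2n+2)$ and the BKT Pieri rule is certainly the natural move, and a line-moduli analysis for the $q^1$ terms is reasonable --- but whether this matches the actual argument in \cite{mihalcea.shifler:qhodd} cannot be determined from the present paper. I would flag two points in your sketch that would need real work: (i) the claim that the restriction map $H^*(\IG(k,2n+2))\to H^*(\IG)$ kills exactly the classes $[Y(\mu)]$ with $\mu\not\supseteq 1^k$ and sends the others to Schubert classes is true but is itself a nontrivial input (it is Mihai's result, essentially Theorem~\ref{thm:oddSchubert}); and (ii) your geometric description of the two quantum strata and the assertion that ``a dimension count forces the coefficient to be exactly $1$'' is the heart of the matter and is where the non-homogeneity really bites --- the cited paper in fact works equivariantly and uses a recursive characterization rather than a direct enumerative count, precisely because transversality arguments are delicate when there is no transitive group action.
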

Consider the operator \begin{equation}\label{E:T} \displaystyle T= \sum_{i=0}^{\dim \IG} c_1(\IG)^{\star i}_{|q=1} = \sum_{i=0}^{\dim \IG} (2n+2-k)^i [X(1)]^{ \star i}_{|q=1}\/, \end{equation}
acting on $H^\bullet(\IG)= \mathrm{QH}^*(\IG)_{|q=1}$. From the Chevalley formula it follows that if $\lambda \in \Lambda$ is an arbitrary partition then $T[X(\lambda)]$ is an effective combination of Schubert classes. We say that $T[X(\lambda)] >0$ if in the expansion \[ T[X(\lambda)]= \sum_{\mu \in \Lambda}  a(\lambda,\mu) [X(\mu)] \] all coefficients are strictly positive, i.e. $a(\lambda,\mu)>0$ for all $\mu \in \Lambda$. Next is a key result in this paper.

\begin{thm}\label{thm:positive} Let $\rho= (2n-k+1, 2n-k, \ldots , 2n-2k+2)$ be the partition indexing the class of the point. Then the following positivity properties hold:

(a) For any $\lambda \in \Lambda$, the coefficient of $[X(\rho)]$ in $T[X(\lambda)]$ is strictly positive;

(b) The coefficient of $[X(0)]$ in $T[X(\rho)]$ is strictly positive;

(c) $T[X(0)] >0$.

\end{thm}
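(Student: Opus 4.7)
By the quantum Chevalley formula (Theorem~\ref{thm:Chevalley}), each application of $\hat c_1 = [X(1)]\star(\cdot)\big|_{q=1}$ sends a Schubert class to a non-negative integer combination of Schubert classes; hence $T$ has non-negative coefficients on the Schubert basis. To show that a particular coefficient of $T[X(\lambda)]$ on $[X(\mu)]$ is strictly positive, the plan is to exhibit a single chain of Chevalley arrows $\lambda = \lambda^{(0)} \to \lambda^{(1)} \to \cdots \to \lambda^{(i)} = \mu$ of length $i \leq \dim \IG$, where each arrow is either a classical one (of weight $2^{A(\lambda^{(j)}, \lambda^{(j+1)})} \geq 1$ and increasing $|\lambda^{(j)}|$ by $1$) or a quantum arrow $\lambda^{(j)} \mapsto (\lambda^{(j)})^{*}$ or $(\lambda^{(j)})^{**}$ (of weight $1$ and decreasing $|\lambda^{(j)}|$ by $2n+1-k$).

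\textbf{Part (a).} First I would check that $\lambda_i \leq \rho_i = 2n-k-i+2$ componentwise for every $\lambda \in \Lambda$: starting from $\lambda_1 \leq 2n+1-k = \rho_1$, one inducts on $i$ using $n-k$-strictness---if $\lambda_{i-1} > n-k$, strictness gives $\lambda_i < \lambda_{i-1} \leq \rho_{i-1}$, so $\lambda_i \leq \rho_i$; otherwise $\lambda_i \leq n-k \leq \rho_i$. Then I would build a chain of single-box classical moves (each valid by Example~\ref{Ex:dominate}) from $\lambda$ to $\rho$: first fill any trailing $-1$ entries of $\lambda$ to $0$ top-down, then add boxes row by row. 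The intermediate partitions stay in $\Lambda$ because $\rho$ is strictly decreasing with all parts $> n-k$, and the boundary condition ``$\lambda_k = -1 \Rightarrow \lambda_1 = 2n+1-k$'' is preserved throughout. The chain has length $|\rho| - |\lambda| \leq \dim \IG$, yielding (a).

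\textbf{Part (b).} The plan is to construct a chain from $\rho$ to $0$ using $k$ quantum $*$-arrows interleaved with single-box classical moves. Set $\alpha_0 = \rho$, and iteratively for $j = 0, 1, \dots, k-1$, pass from $\alpha_j$ to $\alpha_{j+1}$ by first performing $j$ single-box additions to row $1$ (each valid and keeping the partition in $\Lambda$), reaching a partition whose first part is $2n+1-k$, then applying the $*$-arrow. An easy induction shows $\alpha_j = (2n-k-j+1,\, 2n-k-j,\, \dots,\, 2n-2k+2,\, 0,\, \dots,\, 0)$ with $j$ trailing zeros, so $\alpha_k = 0$. The total length is $\sum_{j=0}^{k-1}(j+1) = k(k+1)/2$, and the inequality $k(k+1)/2 \leq \dim \IG = k(2n+1-k) - k(k-1)/2$ simplifies to $k \leq n$, the standing hypothesis.

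\textbf{Part (c) and main obstacle.} The plan is to prove by induction on $|\mu|$ that every $\mu \in \Lambda$ is reachable from $0$ in $|\mu| \leq \dim \IG$ classical Chevalley steps; since $\mu$ ranges over all of $\Lambda$, this yields $T[X(0)] > 0$. The induction step demands a classical predecessor $\lambda \in \Lambda$ with $|\lambda| = |\mu|-1$ and $\lambda \to \mu$. When $\mu_k \geq 0$, the predecessor comes from removing a suitably chosen corner box from $\mu$ (and the single-box arrow is valid by Example~\ref{Ex:dominate}). The genuine difficulty---which I expect to be the main obstacle---is the case $\mu_k = -1$ (forcing $\mu_1 = 2n+1-k$): naively removing a corner from $\mu$ typically violates either ``$\lambda_k = -1 \Rightarrow \lambda_1 = 2n+1-k$'' or $n-k$-strictness. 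The plan for resolving this is to use the finer classical arrows of Definition~\ref{Def:->}: build $\lambda + 1^k$ by adjoining the box $(k,1)$ to the first column of $\mu + 1^k$ (making that column full) and deleting boxes in higher rows so that the $(n+1-k)$-related pattern of Definition~\ref{Def:->}(2) is matched, producing the arrow $\lambda \to \mu$ via the ``remove a size-$s$ vertical strip, add a size-$(s+1)$ horizontal strip'' mechanism with $s = 1$. A case analysis driven by the largest $i$ with $\mu_i = \rho_i$ should produce a valid $\lambda \in \Lambda$ in all cases, and verifying this uniformly is the technical heart of the argument.
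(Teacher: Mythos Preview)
Your arguments for (a) and (b) match the paper's essentially verbatim: single-box additions up to $\rho$, and the sequence of ``add boxes to row~1 until it is full, then apply $*$'' down to $(0)$, with the same length count $k(k+1)/2 \le \dim\IG$.

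The real divergence is in part~(c), and here your predecessor-by-induction strategy makes the problem harder than it needs to be, and the specific fix you propose has a gap. Your plan for $\mu_k=-1$ is to construct $\lambda+1^k$ from $\mu+1^k$ by adjoining the single box $(k,1)$ and invoking the $s=1$ mechanism of Definition~\ref{Def:->}. But when several parts of $\mu$ equal $-1$ (e.g.\ $\mu=\alpha:=(2n+1-k,-1,\dots,-1)$), column~1 of $\mu+1^k$ is short by $s=k-i>1$ boxes, and the $s=1$ construction does not even produce a partition shape: adding just the box $(k,1)$ to $(2n+2-k,0,\dots,0)$ yields no Young diagram. So the ``$s=1$'' claim is wrong in general; you would need the full vertical-strip mechanism with $s$ equal to the number of $-1$'s, and then the verification that condition~(2) can be satisfied in the same row for arbitrary $\mu$ becomes exactly the open-ended case analysis you flag but do not carry out.

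The paper sidesteps this entirely by going forward from $(0)$ rather than backward from $\mu$. The single observation is that the set $\{\mu\in\Lambda:\mu_k=-1\}$ has a unique minimum $\alpha=(2n+1-k,-1,\dots,-1)$, and there is \emph{one} classical arrow into it, namely $(2n-2k+1,0,\dots,0)\to\alpha$, which is precisely the instance of Example~\ref{Ex:dominate}. So the chain is
\[
(0)\to(1)\to\cdots\to(2n-2k+1)\to\alpha\to\cdots\to\mu,
\]
where the last segment consists of single-box additions filling rows $2,3,\dots,k$ in order (these stay in $\Lambda$ because $\mu_1=2n+1-k$ is already maximal and $n{-}k$-strictness is preserved row by row). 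The total length is $|\mu|\le\dim\IG$. This replaces your entire ``technical heart'' with one explicit arrow already exhibited in the paper.
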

Before the proof of the theorem we recall the notion of the (oriented) {\em quantum Bruhat graph} of $\IG$; see \cite{brenti.fomin}. The vertices of this graph consist of partitions $\lambda \in \Lambda$. There is an oriented edge $\lambda \rightarrow \mu$ if the class $[X(\mu)]$ appears with positive coefficient (possibly involving $q$) in the quantum Chevalley multiplication $[X(\lambda)] \star [X(1)]$. An oriented, quantum, {\em Chevalley chain} between two partitions $\lambda$ and $\mu$ is a chain
\[ \lambda_0:=\lambda \rightarrow \lambda_1 \rightarrow \ldots \rightarrow \lambda_s :=\mu \]
in the quantum Bruhat graph of $\IG(k, 2n+1)$.

\begin{remark} Theorem \ref{thm:positive} implies that the quantum Bruhat graph of $\mathrm{IG}$ is strongly connected, i.e. any two of its vertices can be connected by an oriented chain. It is natural to conjecture that $T[X(\lambda)] >0$ for any $\lambda$. If this conjecture is true, it implies that any two points can be connected by a chain containing at most $\dim \IG$ edges.\end{remark}

\begin{proof}[Proof of Theorem \ref{thm:positive}] In each of the parts (a) and (b) it suffices to produce a Chevalley chain between two appropriate partitions which involves at most $\dim \IG $ edges. We consider first the coefficient $a(\lambda, \rho)$. Clearly $a(\lambda, \lambda)\ge 1$, therefore we are done if $\lambda = \rho$. If not then one can keep adding exactly one box to produce a Chevalley chain from $\lambda$ to $\rho$. If $\lambda_k \ge 0$ then it is clear that we arrive at $\rho$ after adding at most $\dim \IG$ boxes. If $\lambda_k = -1$ then necessarily $\lambda_1 = 2n+1-k$, and in the worst case scenario (when $\lambda = (2n+1-k, -1, \ldots , -1)$) we need to add \[ (k-1) + (2n-k) + (2n-k-1) + \ldots + (2n-2k+2) = \dim \IG - (2n-2k+2) \] boxes.
We now turn to part (b). A Chevalley chain from $\rho$ to $(0)$ is constructed as follows. Let $\eta$ be a partition in $\Lambda$ of the form $\eta=(\eta_1 \geq \ldots \geq  \eta_k \geq 0)$. If $\eta_1=2n+1-k$ then $\eta':=\eta^{*}$ exists and it will be the successor of $\eta$. If $\eta_1<2n+1-k$ then the successor of $\eta$ is $\eta':=(\eta_1+1 \geq \eta_2 \geq \ldots \geq \eta_k)$. Notice that in both situations $\eta'_k \ge 0$.
Now start from $\eta:=\rho$ and continue with the rules above. All partitions $\eta$ in this chain will satisfy $\eta_k \ge 0$, and such a chain requires at most \[ k + \sum_{i=1}^{k-1}i= k + \frac{k(k-1)}{2}\] edges to get to the partition $(0)$.
Since \[ \dim\IG=k(2n+1-k)-\frac{(k-1)k}{2} \ge \frac{(k-1)k}{2}+k \/,\] this completes the proof of (b).

To prove (c) we distinguish two cases, when $\lambda_k \ge 0$ and when $\lambda_k= -1$. If $\lambda_k \ge 0$ then a Chevalley chain from $(0)$ to $\lambda$ is constructed by successively adding exactly one box, filling rows $1$, then $2$, etc. Clearly such a chain has exactly $|\lambda| \le \dim \IG$ edges.
Assume now that $\lambda_k = -1$. We first construct a chain from $(0)$ to $\alpha:=(2n+1-k, -1, \ldots , -1)$ (where there are $k-1$ ones) by \[ (0) \rightarrow (1) \rightarrow \ldots \rightarrow (2n-2k+1) \rightarrow \alpha \/. \] The last arrow
 exists by Example \ref{Ex:dominate}. This chain contains $2n-2k+2$ edges. From $\alpha$ to $\lambda$ one can again add exactly one box at every step, resulting in a Chevalley chain with $|\lambda| - |\alpha| = |\lambda| - (2n-2k+2)$ edges. Concatenate the two chains to get a chain from $(0)$ to $\lambda$ containing $|\lambda|$ edges.
\end{proof}

For future use we also record the following lemma.

\begin{lemma}\label{lemma:cycle} There exists a Chevalley cycle of length $2n+2-k$ of the form
\[ (0) \rightarrow (1) \rightarrow \ldots \rightarrow (2n-k+1) \rightarrow (0) \/.\] \end{lemma}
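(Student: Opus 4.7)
The plan is to exhibit the cycle explicitly by splitting it into two pieces: a "classical" path $(0) \to (1) \to \cdots \to (2n-k+1)$ of length $2n-k+1$ obtained by adding one box at a time, and a single "quantum" edge $(2n-k+1) \to (0)$ coming from the $q$-term of the Chevalley formula. The total length is then $(2n-k+1)+1 = 2n+2-k$, matching the Fano index as required.

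First, I would handle the classical arrows. For each $i$ with $0 \le i \le 2n-k$, the partition $(i+1)$ is obtained from $(i)$ by adding a single box. By Example \ref{Ex:dominate}, whenever $\mu$ is obtained from $\lambda$ by adding exactly one box, we have $\lambda \to \mu$ in the sense of Definition \ref{Def:->} (applied to $\lambda+1^k$ and $\mu+1^k$). Since adding a box produces a positive coefficient in the quantum Chevalley expansion of $[X(1)]\star[X(\lambda)]$, this gives an edge in the quantum Bruhat graph. Iterating yields the desired chain $(0) \to (1) \to \cdots \to (2n-k+1)$.

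Next, for the closing edge I would appeal to the quantum term in Theorem \ref{thm:Chevalley}. Set $\lambda := (2n-k+1, 0, \ldots, 0) \in \Lambda$, viewed as a length-$k$ partition. Then $\lambda_1 = 2n+1-k$ and $\lambda_k = 0 \ge 0$, so $\lambda^{*}$ exists by Definition \ref{def:lambda*}(a), and $\lambda^{*} = (0, \ldots, 0) = (0)$. Consequently the quantum Chevalley formula yields a summand $q\,[X((0))]$ in $[X(1)] \star [X(\lambda)]$, which means $(2n-k+1) \to (0)$ is an edge in the quantum Bruhat graph. Concatenating this edge with the previous chain produces the required cycle.

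I do not anticipate any real obstacle here: the only data needed are the form of the classical single-box arrows (immediate from Example \ref{Ex:dominate}) and the existence of the quantum descendant $\lambda^{*}$ for the specific partition $\lambda = (2n+1-k, 0, \ldots, 0)$, which is a direct verification from Definition \ref{def:lambda*}. The length count $(2n-k+1)+1 = 2n+2-k$ then finishes the argument.
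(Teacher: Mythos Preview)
Your proposal is correct and is exactly the argument the paper has in mind; the paper's own proof is the single sentence ``This is clear from the Chevalley formula,'' and your write-up simply unpacks that sentence by exhibiting the $2n-k+1$ single-box classical edges and the one quantum edge coming from $\lambda^{*}=(0)$ for $\lambda=(2n+1-k,0,\ldots,0)$.
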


\begin{proof} This is clear from the Chevalley formula.\end{proof}
 \begin{example}
Consider the case $n=5$ and $k=4$. The following illustrates the chain constructed in part (b). (We also include the Chevalley coefficients and quantum parameters.)
\begin{eqnarray*}
&(7,6,5,4)& \longrightarrow q(6,5,4,0) \longrightarrow q(7,5,4,0) \longrightarrow q^2(5,4,0,0) \longrightarrow q^2(6,4,0) \longrightarrow 2q^2(7,4,0)\\
&\longrightarrow& 2q^2(4,0,0) \longrightarrow 2q^2(5,0,0) \longrightarrow 2^2q^2(6,0,0) \longrightarrow 2^3q^2(7,0,0) \longrightarrow 2^3q^3(0,0,0) \/.
\end{eqnarray*}
\end{example}
\section{Conjecture $\mathcal{O}$ for $\IG(k, 2n+1)$} In this section we prove the main result of this paper:
\begin{thm}\label{mainthm} The odd symplectic Grassmannian $\IG(k, 2n+1)$ satisfies Property $\mathcal{O}$, i.e. the quantum multiplication by $c_1(\IG(k, 2n+1))$ satisfies the conditions (1) and (2) stated in \S \ref{s:intro} above. \end{thm}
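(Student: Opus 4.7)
The plan is to reduce Property $\mathcal{O}$ to the Perron--Frobenius theorem applied to the nonnegative matrix representing $[X(1)]\star$ in the Schubert basis at $q=1$. Since $c_1(\IG)=r[X(1)]$ with $r=2n+2-k$, it suffices to understand the operator $M$ of classical quantum multiplication by $[X(1)]$ specialized at $q=1$; then $\hat c_1 = rM$. Inspection of the quantum Chevalley formula (Theorem \ref{thm:Chevalley}) shows that all structure constants $2^{A(\lambda,\mu)}$ and all quantum coefficients are strictly positive integers, so the matrix of $M$ in the Schubert basis has only nonnegative entries. Perron--Frobenius theory then yields a nonnegative spectral radius $\delta_M$, together with refined information once we prove that $M$ is \emph{irreducible} and compute the \emph{period} $h$ of its associated directed graph, which is exactly the quantum Bruhat graph of $\IG$.

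The first step is to show that the quantum Bruhat graph is strongly connected, i.e.\ $M$ is irreducible. This is an essentially immediate consequence of Theorem \ref{thm:positive}: the existence of positive coefficients in $T[X(\lambda)]$ is equivalent to the existence of a Chevalley chain (of length at most $\dim\IG$) in the quantum Bruhat graph between the corresponding vertices. Concatenating the chains produced in parts (a), (b), (c) yields for every pair $\lambda,\mu\in\Lambda$ a directed path
\[
\lambda \longrightarrow \rho \longrightarrow (0) \longrightarrow \mu,
\]
so the graph is strongly connected and $M$ is irreducible.

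The second and more delicate step is to determine the period $h$, defined as the greatest common divisor of the lengths of all directed cycles. Here the key observation is that the quantum parameter $q$ has degree $r$, so $|\lambda^{*}|=|\lambda^{**}|=|\lambda|+1-r$. Consequently \emph{every} edge $\lambda\to\nu$ in the quantum Bruhat graph (classical or quantum) satisfies $|\nu|\equiv |\lambda|+1\pmod r$. Summing around a directed cycle of length $\ell$ forces $\ell\equiv 0\pmod r$, so $r\mid h$. Lemma \ref{lemma:cycle} exhibits a cycle of length exactly $r$, which gives $h\mid r$, hence $h=r$.

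Combining these two steps with the standard Perron--Frobenius theorem for irreducible nonnegative matrices: $\delta_M$ is a simple eigenvalue of $M$ (of algebraic and geometric multiplicity one, with a strictly positive eigenvector), and every eigenvalue $\delta'$ of $M$ with $|\delta'|=\delta_M$ satisfies $\delta'=\delta_M\zeta$ for some $h$-th root of unity $\zeta$. Multiplying through by $r$, the spectral radius of $\hat c_1=rM$ is $\delta_0=r\delta_M$, it is a simple eigenvalue, and every other eigenvalue on the circle $|\delta|=\delta_0$ is of the form $\delta_0\zeta$ for an $r$-th root of unity $\zeta$. This is precisely conditions (1) and (2) of Property $\mathcal{O}$. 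The main obstacle is the period computation, but the $\mathbb{Z}/r$-grading argument using $\deg q=r$ makes it essentially transparent once the Chevalley formula is in hand; the remaining work (nonnegativity and strong connectivity) is already packaged in Theorems \ref{thm:Chevalley} and \ref{thm:positive}.
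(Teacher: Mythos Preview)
Your proof is correct and follows essentially the same route as the paper: nonnegativity of $M$ from the Chevalley formula, irreducibility from Theorem~\ref{thm:positive}, Perron--Frobenius for condition~(1), and the identification $h=r$ via Lemma~\ref{lemma:cycle} for condition~(2). The only cosmetic differences are that you argue irreducibility directly via strong connectivity (concatenating chains $\lambda\to\rho\to(0)\to\mu$) where the paper phrases it as a contradiction with a preserved coordinate subspace, and you supply the explicit $\Z/r$-grading argument for $r\mid h$ where the paper cites the general Fano fact from \cite[Remark~3.1.3]{GGI}; neither changes the substance of the argument.
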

Recall that the Fano index $r$ of $\IG$ equals the degree of $q$, i.e. $r= 2n+2-k$.
In what follows we fix an arbitrary ordering of the Schubert classes $\{[X_\lambda]\}_\lambda$, and let $M$ denote the matrix of $\hat c_1$ with respect to such an ordered basis. The quantum Chevalley rule implies that $M$ is a {\em nonnegative} matrix, i.e. all its coefficients are nonnegative. The theory of nonnegative matrices (see e.g. in \cite{Minc}) will play a fundamental role. We refer to \cite[\S 3.1 and \S 3.2]{ChLi} for more details and the context of the facts needed for the proof.
\begin{lemma}\label{lem-irred}
 The matrix $M$ is irreducible in the sense that $PMP^t$ is never of the form $\Big(\begin{array}{cc}
A&B\\
0&D
\end{array}\Big)$ for any  permutation matrix $P$,   where $A,D$ are square submatrices.
\end{lemma}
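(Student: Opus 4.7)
The plan is to translate the matrix-theoretic statement into a graph-theoretic one and then invoke Theorem \ref{thm:positive} directly. Recall the standard fact from the Perron--Frobenius theory of nonnegative matrices: a square nonnegative matrix $M$ is irreducible in the sense of the lemma if and only if the associated digraph $G(M)$ is strongly connected (see \cite{Minc}). Here the vertices of $G(M)$ are indexed by the basis elements $\{[X(\lambda)]\}_{\lambda \in \Lambda}$ and there is an oriented edge $\lambda \to \mu$ whenever the corresponding entry of $M$ is positive, i.e.\ whenever $[X(\mu)]$ appears with positive coefficient in $\hat c_1([X(\lambda)])$. Since $c_1(\IG) = (2n+2-k)[X(1)]$, the quantum Chevalley formula (Theorem \ref{thm:Chevalley}) identifies $G(M)$ with the quantum Bruhat graph of $\IG$ introduced just after Theorem \ref{thm:positive}. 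Thus the lemma reduces to showing that the quantum Bruhat graph of $\IG$ is strongly connected.

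Strong connectedness is precisely what Theorem \ref{thm:positive} gives, as observed in the remark following it. Indeed, for any two partitions $\lambda, \mu \in \Lambda$ I would build an oriented chain from $\lambda$ to $\mu$ by concatenating three chains: first use part (a) of Theorem \ref{thm:positive} to get a Chevalley chain from $\lambda$ to the Schubert point $\rho = (2n-k+1, 2n-k, \ldots, 2n-2k+2)$; then apply part (b) to connect $\rho$ to the class of the fundamental cycle $(0)$; and finally apply part (c) to connect $(0)$ to $\mu$. The composite is an oriented chain $\lambda \to \cdots \to \rho \to \cdots \to (0) \to \cdots \to \mu$ in the quantum Bruhat graph.

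Since $\lambda$ and $\mu$ were arbitrary, this shows the quantum Bruhat graph is strongly connected, hence $M$ is irreducible. No step here is a genuine obstacle: the nontrivial input is Theorem \ref{thm:positive}, which has already been proved, and the remaining work is the formal translation between the block-triangular condition on $M$ and strong connectedness of the associated digraph. The only care to be taken is bookkeeping, namely to verify that the edge convention used when defining the quantum Bruhat graph agrees (up to orientation) with the one for the digraph $G(M)$ encoded by the matrix $M$; this is immediate since $\hat c_1$ is a $\Z$-linear combination of Schubert classes whose coefficients are nonnegative integers, by the Chevalley formula specialized at $q=1$.
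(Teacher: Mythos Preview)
Your proof is correct and follows essentially the same approach as the paper: both arguments reduce irreducibility of $M$ to the chain $\lambda \to \rho \to (0) \to \mu$ supplied by parts (a), (b), (c) of Theorem~\ref{thm:positive}. The only cosmetic difference is that the paper phrases this via the contrapositive (a reducible $M$ would force $T$ to preserve a proper coordinate subspace, which Theorem~\ref{thm:positive} then rules out), whereas you invoke directly the standard equivalence between irreducibility and strong connectedness of the associated digraph (stated in the paper as Proposition~\ref{prop:directed}(a)).
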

\begin{proof}
If there exists a permutation matrix $P$ such that $PMP^t$ is a block-upper triangular matrix, then so is
$\sum_{m=0}^{\dim X} PM^mP^t$. The matrix of the operator $T$ is nonnegative, and since $M$ is reducible it follows that (the matrix of) $T$ is again reducible. By a remarkable property of reducible nonnegative matrices
(see e.g. \cite[Remark 3.1, part (1)]{ChLi}) $T$ must preserve a proper {\em coordinate} subspace $V$. Let $[X(\lambda)] \in V$ be a Schubert class inside this subspace. Another remarkable property is that if a class $[X(\mu)]$ appears with positive coefficient in the expansion of $T[X(\lambda)]$ then $[X(\mu)] \in V$ (again see \cite[remark 3.1, part (2)]{ChLi}). But then by part (a) of Theorem \ref{thm:positive} it follows that $[X(\rho)] \in V$, part (b) implies that $[X(0)] \in V$, and part (c) implies that any other $[X(\mu)] \in V$. In particular, $V = H^\bullet(\IG)$, which is a contradiction.\end{proof}

According to Perron-Frobenius theory, any irreducible nonnegative matrix has a real, positive eigenvalue $\delta_0$ of multiplicity one such that for any other eigenvalue $\delta$ there is an inequality $\delta_0 \ge |\delta|$; cf. \cite[Thm. 1.4]{BP}. In order to show that $M$ satisfies Property $\mathcal{O}$ we need to study the {\em index of imprimitivity} of $M$. We recall next the relevant definitions, following \cite{ChLi}.

\begin{defn} (a) Let $A$ be a nonnegative irreducible $n \times n$ matrix with maximal eigenvalue $\delta_0$, and suppose that $A$ has exactly $h$ eigenvalues of modulus $|\delta_0|$. The number $h$ is called the {\it index of imprimitivity of $A$.}

(b) Two matrices $A=(a_{i,j})$ and $B=(b_{i,j})$ are said to have the same {\it zero pattern} if $a_{i,j}=0$ if and only if $b_{i,j}=0$.~A directed graph $D(A)$ is said to be {\it associated} with a nonnegative matrix $A$, if the adjacency matrix of $D(A)$ has the same zero pattern as $A$.

(c) Let $D$ be a strongly connected directed graph. The greatest common divisor of the lengths of all cycles in $D$ is called the {\it index of imprimitivity of $D$} .
\end{defn}

To any pair of a nonnegative matrix $A$ and an {\em ordered} basis one can define a directed graph $D(A)$ such that $D(A)$ is associated to $A$. This is done by replacing nonzero entries of $A$ by $1$, and considering the resulting matrix as the adjacency matrix of a directed graph; the direction of the arrows are determined by the ordering of the basis; see e.g. \cite[\S 3.2]{ChLi}. In our situation the graph $D(M)$ is simply the oriented, quantum Bruhat graph defined in the previous section. Next is a key result relating the index of imprimitivity of $M$ to that of an associated directed graph; see Theorems 3.2 and 3.3 of Chapter 4 of \cite{Minc}.
\begin{prop}\label{prop:directed} Let A be a nonnegative matrix and $D(A)$ the associated directed graph defined above. Then the following hold:

(a) $A$ is irreducible if and only if the associated directed graph $D(A)$ is strongly connected;

(b) If $A$ is irreducible, then the index of imprimitivity $h(A)$ of $A$ is equal to the index of imprimitivity $h(D(A))$ of the associated directed graph $D(A)$.
\end{prop}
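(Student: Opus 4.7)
The plan is to prove the two parts separately, with part (a) being a direct translation between block structure and path structure, and part (b) relying on Frobenius' theorem on the cyclic structure of irreducible nonnegative matrices.

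For part (a), the natural approach is to argue both directions in parallel by interpreting the zero pattern of $A$ directly as the adjacency pattern of $D(A)$. Concretely, $A$ is reducible precisely when there is a nontrivial partition $\{1,\dots,n\} = I \sqcup J$ such that $a_{ij}=0$ whenever $j\in J$ and $i\in I$ (this is what a permutation producing a block upper triangular form records). Under the correspondence defining $D(A)$, this says exactly that no edge of $D(A)$ leaves $J$ and enters $I$, so no vertex of $J$ can reach a vertex of $I$, i.e.\ $D(A)$ fails to be strongly connected. Conversely, if $D(A)$ is not strongly connected, choose a vertex $v$ and let $I$ be the set of vertices reachable from $v$; if $D(A)$ is not strongly connected this can be arranged to be a nontrivial proper subset, and no edge leaves $I$ for its complement, exhibiting the block upper triangular form after relabeling.

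For part (b), the bridge between matrix and graph is the identity that $(A^k)_{ij}>0$ if and only if there is a directed walk of length $k$ from $j$ to $i$ in $D(A)$; this follows by a routine induction on $k$ using only that $A$ is nonnegative and has the same zero pattern as the adjacency matrix of $D(A)$. Applied to diagonal entries, this says $(A^k)_{ii}>0$ iff there is a closed walk of length $k$ through vertex $i$. Since $D(A)$ is strongly connected by (a), for any fixed $i$ every cycle in $D(A)$ can be prepended and appended with paths back to $i$, so the gcd of the set $S_i := \{k \ge 1 : (A^k)_{ii} > 0\}$ equals the gcd of lengths of all closed walks through $i$, which in turn equals $h(D(A))$. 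In particular the gcd of $S_i$ is independent of $i$.

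To finish, I would invoke the analytic half of Frobenius' theorem for irreducible nonnegative matrices (which is what the citation to \cite{Minc} covers): if $A$ is irreducible with Perron root $\delta_0$ and if $h(A)$ denotes the number of eigenvalues of modulus $\delta_0$, then the spectrum of $A$ is invariant under multiplication by $e^{2\pi i/h(A)}$, and one can permute the basis so that $A$ takes the cyclic block form
\[
A \;\sim\; \begin{pmatrix} 0 & A_{12} & 0 & \cdots & 0 \\ 0 & 0 & A_{23} & \cdots & 0 \\ \vdots & & & \ddots & \vdots \\ 0 & 0 & 0 & \cdots & A_{h(A)-1,h(A)} \\ A_{h(A),1} & 0 & 0 & \cdots & 0 \end{pmatrix},
\]
with $A^{h(A)}$ block diagonal and each block primitive. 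From this normal form one reads off that $(A^k)_{ii}>0$ forces $h(A)\mid k$, while some power $A^{m\,h(A)}$ has strictly positive diagonal; thus $\gcd S_i = h(A)$. Combining with the graph side gives $h(A) = \gcd S_i = h(D(A))$.

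The main obstacle is the normal-form half of Frobenius' theorem invoked in the last step; it is a genuine spectral statement that cannot be deduced purely combinatorially from $D(A)$. The graph-theoretic half, by contrast, reduces to the walk-counting identity and is essentially bookkeeping once (a) has been established.
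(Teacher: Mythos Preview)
Your argument is essentially correct and follows the standard route found in references such as Minc. Note, however, that the paper does not actually prove this proposition: it is stated as a known fact with a citation to Theorems 3.2 and 3.3 of Chapter~4 of \cite{Minc}, and no argument is given. So there is no ``paper's own proof'' to compare against; you have supplied a proof where the authors simply quote the literature.

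A couple of minor points of presentation. In part (a), your converse direction is slightly loose: rather than ``this can be arranged to be a nontrivial proper subset'', it is cleaner to pick vertices $u,w$ with $w$ not reachable from $u$ and let $I$ be the set of vertices reachable from $u$; then $I$ is automatically proper and nonempty, and no edge leaves $I$, which after relabeling gives the block upper-triangular form. In part (b), your reduction of $\gcd S_i$ to $h(D(A))$ implicitly uses that any closed walk decomposes (as a multiset of edges) into cycles, so its length is a nonnegative integer combination of cycle lengths; and conversely that for any cycle $C$ through a vertex $v$, both the closed walk $i\to v\to i$ and the closed walk $i\to v\to C\to v\to i$ lie in $S_i$, so their difference $|C|$ is divisible by $\gcd S_i$. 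Making these two observations explicit removes any doubt. The appeal to the Frobenius cyclic normal form for the equality $\gcd S_i = h(A)$ is exactly the content of the cited theorems and is the right way to close the argument.
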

\noindent\begin{proof}[Proof of Theorem \ref{mainthm}] Let $h$ denote the imprimitivity of matrix $M$. The eigenvalues of $\hat c_1$ are that of the matrix $M$. Since $M$ is an irreducible, nonnegative matrix, the results \cite[Theorems 1.4 and 2.20 of Chapter 2]{BP} imply the following two facts.
 \begin{enumerate}
   \item[(i)]The real number $\delta_0$ is an eigenvalue of $\hat c_1$ of multiplicity one.
   \item[(ii)] Denote by $\delta_1,\cdots, \delta_h$ all    eigenvalues of $M$ of modulus  $\delta_0$ with multiplicities counted. Then $\{{\delta_i\over \delta_0}~|~1\leq i\leq h\}$ are precisely the $h$-th roots of unity. \end{enumerate}
Part (i) proves condition (1) of the Property $\mathcal{O}$. To prove the second condition of Property $\mathcal{O}$ it suffices to show that $h=r$ (the Fano index). A general property of Fano manifolds shows that $r$ always divides $h$; see \cite[Remark 3.1.3]{GGI}. For the converse, notice that by Lemma \ref{lemma:cycle} there exists a cycle of length $r=2n+2-k$ in $D(M)$. Then $h $ divides $r$ by Proposition \ref{prop:directed}, hence $h=r$ and we are done.\end{proof}
\def\cprime{$'$}
\providecommand{\bysame}{\leavevmode\hbox to3em{\hrulefill}\thinspace}
\providecommand{\MR}{\relax\ifhmode\unskip\space\fi MR }
\providecommand{\MRhref}[2]{%
  \href{http://www.ams.org/mathscinet-getitem?mr=#1}{#2}
}
\providecommand{\href}[2]{#2}

\end{document}